\newtheorem{definition}{\bf Definition}[section]
\newtheorem{lemma}{\bf Lemma}[section]
\newtheorem{proposition}{\bf Propostion}[section]
\newtheorem{theorem}{\bf Theorem}[section]
\newtheorem{step}{\bf Step}
\newtheorem{remark}{\bf Remark}[section]
\numberwithin{equation}{section}
\newcommand{\torus}{{\mathbb T}}
\newcommand{\ds}{\displaystyle}
\newcommand{\loc}{\rm{loc}}
\DeclareMathOperator{\dv}{div} %
\DeclareMathOperator{\curl}{curl} %
\title[Energy balance in 2D]{Energy balance for forced two-dimensional incompressible ideal fluid flow}
\author[Lopes Filho, Nussenzveig Lopes]{Milton C. Lopes Filho and Helena J. Nussenzveig Lopes}
\address{Instituto de Matem\'atica\\Universidade Federal do Rio de Janeiro\\Cidade Universit\'aria -- Ilha do Fund\~ao\\Caixa Postal 68530\\21941-909 Rio de Janeiro, RJ -- BRAZIL\\}
\email{mlopes@im.ufrj.br}
\email{hlopes@im.ufrj.br}
\begin{document}


\keywords{Navier-Stokes, Euler, vorticity, viscosity, turbulence}


\begin{abstract}
In \cite{CLNS16}, Cheskidov {\it et al.} proved that physically realizable weak solutions of the incompressible 2D Euler equations on a torus conserve kinetic energy. Physically realizable weak solutions are those that can be obtained as limits of vanishing viscosity. The key hypothesis was boundedness of the initial vorticity in $L^p$, $p>1$. In this work we extend their result, by adding forcing to the flow.
\end{abstract}

\maketitle

\begin{center}
{\em  To Uriel Frisch, on the occasion of his 80$^{th}$ birthday.}
\end{center}

\vspace{10pt}

\section{Introduction}

In this article we consider weak solutions of the incompressible Euler equations on the two-dimensional flat torus which happen to be vanishing viscosity limits in a sense that will be made precise. In \cite{CLNS16}, Cheskidov {\it et al.} proved that, under the assumption that the initial vorticities are $p$-th power integrable, with $p>1$, such solutions conserve kinetic energy. This result was extended in \cite{LMPP20} and  \cite{Ciampa21} and shown to be sharp in \cite{CL21}.  The flows considered in \cite{CLNS16}, \cite{CL21}, \cite{Ciampa21} and \cite{LMPP20} are unforced, so that conservation of kinetic energy means that the $L^2$-norm of velocity is conserved in time. For flows with forcing, conservation of energy takes the form of an identity, where the rate-of-change of the kinetic energy of the fluid equals the work performed by the external force on the fluid per unit of time.

In this work we seek precise regularity conditions on the forcing which allow us to extend the result on conservation of kinetic energy in \cite{CLNS16} to full energy conservation in flows with forcing.

There are three main motivations for this work. The first is that low-regularity flows are a natural mathematical context for turbulence, and forcing is one of the preferred mechanisms for the generation of the required small scales. The other is boundary friction, which happens to make the vanishing viscosity problem nearly mathematically untreatable. The second motivation is the work by Constantin {\em et al.}, \cite{CDE20}, followed by \cite{NSW21} and \cite{CCS20}, where the authors establish strong convergence of vorticity in the vanishing viscosity limit. In both \cite{CDE20} and \cite{NSW21} the flows are forced and the forcing is allowed to be irregular. In particular, the inviscid vorticity is shown to be a {\em renormalized solution} of the vorticity form of the 2D Euler equations, which is a nonlinear transport equation, and, consequently, enstrophy, along with all $L^p$-norms, are balanced. See also \cite{CS16} and \cite{CNSS17} for earlier, related, work on conservation of $L^p$-norms of vorticity for unforced, vanishing viscosity, inviscid flows  and  \cite{CCS19} for flows generated by the vortex blob method. Finally, our third motivation is that adding forcing turned out to be a mathematically interesting technical problem involving elementary tools.

\section{Preliminaries}

We consider the initial-value problem for the two-dimensional incompressible Euler equations on the torus $\mathbb{T}^2 \equiv [0,2\pi]^2$, with initial velocity $u_0$, subject to a given external force $F$, within a finite time horizon $T>0$:


\begin{equation}\label{eeqF}
\left\{
\begin{array}{ll}
\partial_t u + (u \cdot \nabla) u = -\nabla p + F, & \text{ in } (0,T) \times \torus^2; \\
\dv u = 0, & \text{ in } [0,T) \times \torus^2;\\
u(t=0) = u_0, & \text{ at } \{0\} \times \torus^2.
\end{array}
\right.
\end{equation}

In this work we are interested in {\it weak solutions} for which the vorticity, $\omega \equiv \curl u$, is $p$-th power integrable, for some $p > 1$. Let us begin by recalling the definition of a weak solution.


\begin{definition} \label{wksolee}
Fix $T>0$ and let $u_0 \in L^2(\torus^2)$ be a divergence-free vector field. Assume $F \in L^1((0,T);L^2(\torus^2))$. Let $u \in C^0([0,T);L^2(\mathbb{T}^2))$. We say $u$ is a {\em weak solution of the incompressible Euler equations with forcing term $F$ and  initial velocity $u_0$} if
\begin{enumerate}
\item for every test vector field $\Phi \in C^{\infty}([0,T) \times \torus^2)$ such that $\dv \Phi(t, \cdot) = 0$ the following identity holds true:
\[\int_0^T \int_{\torus^2} \partial_t \Phi \cdot u + u \cdot D\Phi \,u \, {\rm d}x{\rm d}t + \int_{\torus^2} \Phi(0,\cdot) \cdot u_0 \, {\rm d}x + \int_0^T\int_{\torus^2} F \cdot \Phi \,{\rm d}x{\rm d}t = 0.\]

\item For every $t \in [0,T)$, $\mbox{ div } u(t,\cdot) = 0$, in the sense of distributions.
\end{enumerate}
\end{definition}

Existence of weak solutions in the sense of Definition \ref{wksolee} has been established in \cite{D-M1987}, without forcing, for initial velocity fields whose $\curl$, the initial vorticity, is $p^{th}$-power integrable, $p>1$; a simple adaptation of their proof may be used to deal with the case of forcing as well. This is the existence result which is relevant in the context of this work. Other existence results may be found, for instance, in \cite{Delort}, \cite{VW93} and in \cite{DeLellis-Szekelyhidi2019} and references therein.

Our focus hereafter are weak solutions which satisfy an energy balance identity.

\begin{definition} \label{enbalwksol}
We call $u \in C^0([0,T);L^2(\mathbb{T}^2))$ an {\em energy balanced weak solution} if $u$ is a weak solution of the incompressible Euler equations with forcing term $F \in L^1((0,T);L^2(\torus^2))$ for which
\begin{equation}\label{enbalance}
\| u(t,\cdot) \|_{L^2(\torus^2)}^2 = \| u_0 \|_{L^2(\torus^2)}^2 + 2 \int_0^t \int_{\torus^2} F \cdot u \,{\rm d}x{\rm d}s, \text{ for every } t \in [0,T).
\end{equation}
\end{definition}

It is well-known that, for general initial data $u_0 \in L^2$, weak solutions are not unique, see  \cite{DeLellis-Szekelyhidi2019} and references therein. Moreover, uniqueness has not been established even under additional restrictions on integrability of vorticity, with the exception of $\omega_0=\curl u_0 \in L^\infty$, see \cite{Yudovich1963}, and slightly weaker spaces close to $L^\infty$, see \cite{Yudovich1995}, and there is some indication of non-uniqueness of weak solutions if, for any $1\leq p < \infty$, $\omega_0 \in L^p$, see  \cite{Bressan2020}. It is natural, therefore, to focus on special weak solutions, such as those obtained as limits of solutions of the more physically realistic Navier-Stokes equations. It was established in \cite{CLNS16} that, in the absence of forcing,  if the initial vorticity, $\omega_0 \equiv \curl u_0$, belongs to $L^p(\torus^2)$, for
some $p > 1$, then weak solutions which are limits of vanishing viscosity enjoy a special property, namely conservation of kinetic energy.

The initial-value problem for the incompressible 2D Navier-Stokes equations with viscosity $\nu$ and external forcing $F^\nu$, on the torus $\torus^2$, over a finite time horizon $T>0$, is given by:
\begin{equation}\label{nuNSeqF}
\left\{
\begin{array}{ll}
\partial_t u + (u \cdot \nabla) u = -\nabla p + \nu \Delta u + F^\nu, & \text{ in } (0,T) \times \torus^2; \\
\dv u = 0, & \text{ in } [0,T) \times \torus^2;\\
u(t=0) = u_0^\nu, & \text{ at } \{0\} \times \torus^2.
\end{array}
\right.
\end{equation}

It is well-known that, if $u_0 \in L^2(\torus^2)$ and $F^\nu \in L^1((0,T);L^2(\torus^2))$, then \eqref{nuNSeqF} is well-posed in $L^\infty((0,T);L^2(\torus^2)) \cap L^2((0,T);H^1(\torus^2))$; we denote the solution by $u^\nu$. Furthermore, the following energy balance identity holds true:
\begin{eqnarray}\label{enbalanceNS}
\nonumber
\| u^\nu(t,\cdot) \|_{L^2(\torus^2)}^2 = \| u_0^\nu \|_{L^2(\torus^2)}^2 -2\nu \int_0^t \|\omega^\nu({\rm s},\cdot)\|_{L^2(\torus^2)}^2 \,{\rm ds} \\
+ 2 \int_0^t \int_{\torus^2} F^\nu \cdot u^\nu \,{\rm dxds}, \text{ for every } t \in [0,T),
\end{eqnarray}
where $\omega^\nu = \curl u^\nu$.

The term
\begin{equation} \label{dissipterm}
\nu \int_0^t \|\omega^\nu({\rm s},\cdot)\|_{L^2(\torus^2)}^2 \,{\rm ds}
\end{equation}
is called the {\em energy dissipation term} and, formally, it vanishes when $\nu = 0$.


\begin{definition} \label{physsol}
Let $u \in C([0,T);L^2(\torus^2))$ and $F \in L^1((0,T);L^2(\torus^2))$. We say that $u$ is a {\em physically realizable weak solution of the incompressible 2D Euler equations with external forcing} $F$, if the following conditions hold:
\begin{enumerate}
\item[(1)] $u$ is a weak solution of the Euler equations in the sense of Definition \ref{wksolee};
\item[(2)] there exists a family of solutions of the incompressible 2D Navier-Stokes equations with viscosity $\nu >0$, $\{u^{\nu}\}$, with forcing $F^\nu \in L^1((0,T);L^2(\torus^2))$, such that, as $\nu \to 0$,
    \begin{enumerate}
    \item $u^{\nu} \rightharpoonup u$ weakly$\ast$ in $L^{\infty}((0,T);L^2(\torus^2))$;
    \item $F^\nu \rightharpoonup F$ weakly in $L^1((0,T);L^2(\torus^2))$;
    \item $u^{\nu}(0,\cdot)\equiv u_0^{\nu} \to u_0 \equiv u(0,\cdot) $ strongly in $L^2(\torus^2)$.
    \end{enumerate}
\end{enumerate}
\end{definition}

If $u$ is a physically realizable weak solution as above then we refer to a family $\{u^\nu\}$ satisfying (2a)--(2c) of Definition \ref{physsol} as a {\em physical realization} of $u$.

We are now in position to state our main result.

\begin{theorem} \label{physwksoltns}
Let $u \in C([0,T);L^2(\torus^2))$ be a physically realizable weak solution of the incompressible 2D Euler equations with external forcing $F \in L^1((0,T);L^2(\torus^2))$. Consider a physical realization of $u$, $\{u^\nu\}$, which are solutions of the 2D Navier-Stokes equations with viscosity $\nu>0$ and forcing $F^\nu \in L^1((0,T);L^2(\torus^2))$. Suppose, additionally, that, for some $p>1$:
\begin{enumerate}
  \item \label{Lpinitvort} $\curl u_0 \equiv \omega_0 \in L^p(\torus^2)$;
  \item \label{Lpinitvortconv} $\curl u_0^\nu \equiv \omega_0^\nu \to \omega_0$ strongly in $L^p(\torus^2)$;
  \item \label{gnubdd} $g^\nu \equiv \curl F^\nu$ is bounded in $L^1((0,T);L^p(\torus^2)) \cap L^\infty(0,T);L^2(\torus^2))$.
\end{enumerate}

Then $u$ is an energy balanced weak solution.
\end{theorem}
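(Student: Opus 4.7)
The plan is to start from the Navier--Stokes energy identity \eqref{enbalanceNS} for a physical realization $\{u^\nu\}$ and pass $\nu\to 0$, matching each of the four ingredients with its Euler counterpart in \eqref{enbalance}. The initial-data term is handled by hypothesis (2c) of Definition \ref{physsol}. The two substantive tasks are: (i) showing that the viscous dissipation $\nu\int_0^t \|\omega^\nu\|_{L^2}^2\,ds$ vanishes in the limit; and (ii) producing enough compactness to identify the limit of both $\|u^\nu(t,\cdot)\|_{L^2}^2$ and the forcing pairing $\int F^\nu\cdot u^\nu$.

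For (i), I would derive an $L^p$-energy estimate for the vorticity equation $\partial_t\omega^\nu + u^\nu\cdot\nabla\omega^\nu = \nu\Delta\omega^\nu + g^\nu$. Multiplication by $|\omega^\nu|^{p-2}\omega^\nu$ combined with the incompressibility of $u^\nu$, hypotheses (1)--(3) and H\"older's inequality produces both the uniform bound $\|\omega^\nu\|_{L^\infty_tL^p_x}\le C$ and, via Kato's inequality $|\nabla(|\omega^\nu|^{p/2})|^2\le(p^2/4)|\omega^\nu|^{p-2}|\nabla\omega^\nu|^2$,
\[
\nu\int_0^T \big\|\nabla(|\omega^\nu|^{p/2})\big\|_{L^2}^2\,dt\le C_p.
\]
When $p\ge 2$ the first bound alone delivers $\nu\int_0^T\|\omega^\nu\|_{L^2}^2\,dt\le CT\nu\to 0$. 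The delicate regime is $1<p<2$, and here I would combine the two bounds through the 2D Gagliardo--Nirenberg inequality applied to $|\omega^\nu|^{p/2}$ at the exponent $4/p$, obtaining (modulo a lower-order correction)
\[
\|\omega^\nu(t,\cdot)\|_{L^2}^2 \le C\,\|\omega^\nu(t,\cdot)\|_{L^p}^p\,\big\|\nabla(|\omega^\nu|^{p/2})(t,\cdot)\big\|_{L^2}^{2(2-p)/p}.
\]
Since $2(2-p)/p<2$, a H\"older-in-time bound against the $L^2_t$-control on $\sqrt{\nu}\,\nabla(|\omega^\nu|^{p/2})$ yields
\[
\nu\int_0^T\|\omega^\nu\|_{L^2}^2\,dt \,\le\, C_T\,\nu^{(2p-2)/p}\,\longrightarrow\, 0,
\]
since $(2p-2)/p>0$ for $p>1$. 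This is the main technical obstacle and the heart of the argument.

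For (ii), the Biot--Savart law lifts $\|\omega^\nu\|_{L^\infty_tL^p_x}\le C$ to a uniform $L^\infty_t W^{1,p}_x$ bound on $u^\nu$, which Sobolev embeds into $L^\infty_t L^r_x$ for some $r>2$. Reading $\partial_t u^\nu$ off \eqref{nuNSeqF} after applying the Leray projector, and noting that hypothesis (3) together with Biot--Savart for $g^\nu$ yields $F^\nu$ uniformly bounded in $L^\infty_t L^2_x$, I would obtain a uniform bound for $\partial_t u^\nu$ in some negative Sobolev space. The Aubin--Lions lemma then furnishes a subsequence $u^\nu\to u$ strongly in $L^2((0,T)\times\torus^2)$, hence $\|u^\nu(t,\cdot)\|_{L^2}^2\to\|u(t,\cdot)\|_{L^2}^2$ for a.e.\ $t$ after a further extraction. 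The weak--strong pairing of $F^\nu\rightharpoonup F$ in $L^2((0,T)\times\torus^2)$ against the strong convergence of $u^\nu$ delivers the forcing term. Passing to the limit in \eqref{enbalanceNS} then yields \eqref{enbalance} for a.e.\ $t$, and finally for every $t\in[0,T)$ by continuity of both sides, using $u\in C([0,T);L^2(\torus^2))$ and $F\cdot u\in L^1((0,T)\times\torus^2)$.
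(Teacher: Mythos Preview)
Your proposal is correct, and your compactness argument (ii) tracks the paper's Lemma closely (uniform $L^\infty_tL^p_x$ vorticity bound, Biot--Savart, equicontinuity into a negative space, Aubin--Lions). One minor imprecision: hypothesis (3) only bounds $\mathbb{P}F^\nu$, not $F^\nu$, in $L^\infty_tL^2_x$; but since $u^\nu$ is divergence-free this is exactly what the forcing pairing needs.

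Where your argument genuinely diverges from the paper is in step (i). The paper multiplies the vorticity equation by $\omega^\nu$, applies Gagliardo--Nirenberg to $\omega^\nu$ itself, and arrives at the differential inequality
\[
\frac{d}{dt}z^\nu \le -A\nu (z^\nu)^{\alpha} + B\sqrt{z^\nu}, \qquad z^\nu=\|\omega^\nu\|_{L^2}^2,\ \alpha=\tfrac{2}{2-p}>2.
\]
The forcing contributes the growth term $B\sqrt{z^\nu}$, and the bulk of the paper is an ODE comparison analysis: locating the nontrivial root $R_\nu^\ast=(B/A\nu)^{2/(2\alpha-1)}$ of the right-hand side, splitting into three cases according to $\limsup z^\nu(0)/R_\nu^\ast$, and, in the hard case, building an explicit antiderivative $\Phi_\nu$ to control the supersolution and show $\nu\int z^\nu\,dt\to0$ along a subsequence. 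You bypass all of this by multiplying instead by $|\omega^\nu|^{p-2}\omega^\nu$, which harvests the dissipation bound $\nu\int_0^T\|\nabla(|\omega^\nu|^{p/2})\|_{L^2}^2\,dt\le C$ directly; Gagliardo--Nirenberg on $|\omega^\nu|^{p/2}$ then interpolates $\|\omega^\nu\|_{L^2}^2$ between $\|\omega^\nu\|_{L^p}^p$ and this quantity, and H\"older in time yields the explicit rate $\nu^{(2p-2)/p}$. Your route is shorter, avoids the ODE machinery and the subsequence extraction, and gives a quantitative decay rate; the paper's route, while longer, makes the competition between dissipation and forcing dynamically transparent through the attracting equilibrium $R_\nu^\ast$.
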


\section{Proof of Theorem \ref{physwksoltns}}

We begin by rewriting the energy balance identity \eqref{enbalanceNS} in a more convenient way. Any solution $u^{\nu}$ of the Navier-Stokes equations \eqref{nuNSeqF} satisfies
\begin{eqnarray}\label{ensandwich}
\nonumber
0 \leq \| u_0^\nu \|_{L^2(\torus^2)}^2 - \| u^\nu(t,\cdot) \|_{L^2(\torus^2)}^2 + 2 \int_0^t \int_{\torus^2} F^\nu \cdot u^\nu \,{\rm dxds} \\
=  2\nu \int_0^t \|\omega^\nu({\rm s},\cdot)\|_{L^2(\torus^2)}^2 \,{\rm ds}.
\end{eqnarray}

Consider $u$ a physically realizable weak solution of the Euler equations, as introduced in Definition \ref{physsol}, and let $\{u^\nu\}$ be a physical realization of $u$. In order to prove Theorem \ref{physwksoltns}  we will establish that $u$ satisfies the inviscid energy balance identity \eqref{enbalance}, by passing to the limit $\nu \to 0$ in \eqref{ensandwich}.

From condition (2a) of Definition \ref{physsol} the family $\{u^\nu\}$ converges to $u$  weak$\ast$-$L^\infty((0,T);L^2(\torus^2))$. In addition, we have $F^\nu \rightharpoonup F$ weak-$L^1((0,T);L^2(\torus^2))$ (condition (2b)) and $u^\nu_0 \to u_0$ strongly in $L^2(\torus^2)$ (this is condition (2c)). Using these facts, and in view of \eqref{ensandwich} it is easy to see that, in order to prove Theorem \ref{physwksoltns}, it is enough to show that the weak-$\ast$ convergence of $u^\nu$ is, in fact, strong, and that the energy dissipation term \eqref{dissipterm} vanishes in the limit. Let us begin by addressing strong convergence.

\begin{lemma} \label{strongconv}
Let $u \in C([0,T);L^2(\torus^2))$ be a physically realizable weak solution with external forcing $F \in L^1((0,T);L^2(\torus^2))$ and let $\{u^\nu\}$ be a physical realization. Assume that hypotheses \ref{Lpinitvort}, \ref{Lpinitvortconv} and \ref{gnubdd} from Theorem \ref{physwksoltns} are satisfied. Then $u^\nu \to u$ strongly in $C([0,T);L^2(\torus^2))$.

\end{lemma}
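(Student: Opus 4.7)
\medskip

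\noindent\textbf{Proof plan.} The strategy is a standard compactness upgrade: the weak-$\ast$ convergence in $L^\infty((0,T);L^2)$ is already given by the definition of physical realization, so it suffices to establish uniform spatial regularity together with uniform time equicontinuity of the family $\{u^\nu\}$, and then invoke the Aubin--Lions--Simon lemma to deduce relative compactness in $C([0,T];L^2(\torus^2))$. The identification of the limit with $u$ then follows automatically from uniqueness of weak-$\ast$ limits.

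For the spatial regularity, the plan is to work at the level of the vorticity equation $\partial_t \omega^\nu + u^\nu \cdot \nabla \omega^\nu = \nu \Delta \omega^\nu + g^\nu$. Multiplying by $|\omega^\nu|^{p-2}\omega^\nu$, the transport term vanishes by $\dv u^\nu = 0$, the viscous term is non-positive, and H\"older on the remaining term yields
\[
\frac{d}{dt}\|\omega^\nu(t,\cdot)\|_{L^p(\torus^2)} \le \|g^\nu(t,\cdot)\|_{L^p(\torus^2)}.
\]
Together with hypothesis \ref{Lpinitvortconv} and the $L^1((0,T);L^p(\torus^2))$ part of hypothesis \ref{gnubdd}, this gives a uniform bound on $\omega^\nu$ in $L^\infty((0,T);L^p(\torus^2))$. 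The Biot--Savart operator $\omega \mapsto \nabla^\perp \Delta^{-1}\omega$ is a Calder\'on--Zygmund operator on the torus (after projecting out the mean), bounded on $L^p$ for $1<p<\infty$, so $u^\nu$ is uniformly bounded in $L^\infty((0,T);W^{1,p}(\torus^2))$. In two dimensions the Rellich--Kondrachov theorem gives a compact embedding $W^{1,p}(\torus^2) \hookrightarrow\!\hookrightarrow L^2(\torus^2)$ for any $p>1$.

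For time equicontinuity, I would read off $\partial_t u^\nu$ from the Navier--Stokes system: $\partial_t u^\nu = -\mathbb{P}\dv(u^\nu \otimes u^\nu) + \nu \Delta u^\nu + \mathbb{P}F^\nu$, where $\mathbb{P}$ is the Leray projector. Sobolev embedding of $W^{1,p}$ into some $L^q$ with $q>2$ controls $u^\nu \otimes u^\nu$ in $L^\infty((0,T);L^{q/2}(\torus^2))$, so its divergence is uniformly controlled in some $L^\infty((0,T);H^{-s}(\torus^2))$; the viscous term is of order $\nu$ in $L^\infty((0,T);H^{-1}(\torus^2))$ from the $W^{1,p}$ bound; and $F^\nu$ is uniformly in $L^1((0,T);L^2(\torus^2))$ by definition of physical realization. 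Combining these produces a uniform bound on $\partial_t u^\nu$ in $L^1((0,T);H^{-s}(\torus^2))$ for a suitable $s$ depending on $p$. The Aubin--Lions--Simon lemma applied to the triple $W^{1,p}\hookrightarrow\!\hookrightarrow L^2 \hookrightarrow H^{-s}$ then yields relative compactness of $\{u^\nu\}$ in $C([0,T];L^2(\torus^2))$, and since every cluster point must agree with the weak-$\ast$ limit $u$, the full family converges strongly.

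The main technical issue I anticipate is bookkeeping exponents when $p$ is close to $1$: the Sobolev target $L^{2p/(2-p)}$ degenerates toward $L^2$, forcing $s$ in the time-derivative estimate to be chosen carefully, but one always has $q>2$ strictly, so the scheme closes. A secondary but routine point is handling mean values of $u^\nu$ on the torus so that Biot--Savart and the Leray projector are unambiguously defined; this is settled by noting that the mean of $u^\nu$ evolves only through the mean of $F^\nu$, which by hypothesis lies in $L^1((0,T);L^2(\torus^2))$, so the mean part of $u^\nu$ converges strongly in $C([0,T];\real^2)$ independently of the argument above.
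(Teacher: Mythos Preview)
Your proposal is correct and follows essentially the same route as the paper: the $L^p$ vorticity estimate via the transport--diffusion structure, elliptic/Biot--Savart regularity to get $u^\nu$ bounded in $L^\infty((0,T);W^{1,p})$, a negative-Sobolev bound on $\partial_t u^\nu$ read off from the equation, and Aubin--Lions with $W^{1,p}\hookrightarrow\!\hookrightarrow L^2 \hookrightarrow H^{-s}$. The only noteworthy difference is in the forcing contribution to the time-derivative estimate: the paper invokes the $L^\infty((0,T);L^2)$ part of hypothesis~\ref{gnubdd} on $g^\nu=\curl F^\nu$ to place $\mathbb{P}F^\nu$ in $L^\infty((0,T);H^1)$ via Poincar\'e, whereas you use only the $L^1((0,T);L^2)$ bound on $F^\nu$ coming (via uniform boundedness) from the weak convergence in Definition~\ref{physsol}(2b); your variant is slightly more economical for this particular lemma, but both close the argument in the same way.
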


\begin{proof}

Let $p>1$ be such that \ref{Lpinitvort}, \ref{Lpinitvortconv} and \ref{gnubdd} from Theorem \ref{physwksoltns} are satisfied.

Taking the $\curl$ of the Navier-Stokes equations \eqref{nuNSeqF} leads to the vorticity equations for $\omega^\nu \equiv \curl u^\nu$:
\begin{equation}\label{vortnuNSeqF}
\left\{
\begin{array}{ll}
\partial_t \omega^\nu + (u^\nu \cdot \nabla) \omega^\nu = \nu \Delta \omega^\nu + g^\nu, & \text{ in } (0,T) \times \torus^2; \\
\dv u^\nu = 0, & \text{ in } [0,T) \times \torus^2;\\
\curl u^\nu = \omega^\nu, & \text{ in } [0,T) \times \torus^2;\\
\omega^\nu(t=0) = \omega_0^\nu, & \text{ at } \{0\} \times \torus^2,
\end{array}
\right.
\end{equation}
where $g^\nu = \curl F^\nu$. Standard energy estimates yield $\omega^\nu \in L^\infty((0,T);L^p(\torus^2)) \cap L^2((0,T);W^{1,p}(\torus^2))$. Since $W^{1,p}(\torus^2) \subset L^2(\torus^2)$ if $p>1$ it follows that $\omega^\nu$ is a distributional solution of \eqref{vortnuNSeqF}.

An easy energy estimate then gives
\begin{equation} \label{vortLpest}
\| \omega^\nu(t,\cdot) \|_{L^p(\torus^2)} \lesssim \|\omega_0^\nu\|_{L^p(\torus^2)} + \int_0^T \|g^\nu({\rm s},\cdot)\|_{L^p(\torus^2)} \, {\rm ds}.
\end{equation}
Therefore, using \ref{Lpinitvort}, \ref{Lpinitvortconv} and \ref{gnubdd} from Theorem \ref{physwksoltns}, we find that $\{ \omega^\nu \}$ is a bounded subset of $L^\infty((0,T);L^p(\torus^2))$. It follows by elliptic regularity that $\{u^\nu\}$ is a bounded subset of $L^\infty((0,T);W^{1,p}(\torus^2))$.

From assumption \ref{gnubdd} we have that $\{\curl F^\nu \} $ is bounded in $L^\infty((0,T);L^2(\torus^2))$. Thus the divergence-free part of $F^\nu$, which is the Leray projection $\mathbb{P} F^\nu$, is bounded in $L^\infty((0,T);H^1(\torus^2))$, by the Poincar\'e inequality. This is enough to obtain that $\{u^\nu\}$ is equicontinuous from $[0,T]$ into $H^{-L}(\torus^2)$ for some (perhaps) large $L>0$.

It follows from the Aubin-Lions lemma, since $W^{1,p}(\torus^2)$ is compactly imbedded in $L^2(\torus^2)$, that $\{u^\nu\}$ is compact in $C^0([0,T];L^2(\torus^2))$. Since we already have $u^\nu \rightharpoonup u$ weak$\ast$-$L^\infty((0,T);L^2(\torus^2))$ we deduce convergence of the whole family, just as in \cite[Lemma 1]{CLNS16}.

This concludes the proof.

\end{proof}

Following the strategy set forth before Lemma \ref{strongconv}, it remains to examine the behavior of the dissipation term \eqref{dissipterm}. In the following Proposition we will show that the energy dissipation term vanishes along a subsequence. This is the core of the proof of Theorem \ref{physwksoltns} and the main contribution of the present work. It is elementary, but rather intricate.

\begin{proposition} \label{dissiptermto0}
  Let $u \in C([0,T);L^2(\torus^2))$ be a physically realizable weak solution with external forcing $F \in L^1((0,T);L^2(\torus^2))$ and let $\{u^\nu\}$ be a physical realization. Assume that \ref{Lpinitvort}, \ref{Lpinitvortconv} and \ref{gnubdd} from Theorem \ref{physwksoltns} are satisfied. Then, passing to subsequences as needed,
 \begin{equation} \label{limnuto0dissipterm}
 \lim_{\nu \to 0^+} \nu \int_0^t \|\omega^\nu({\rm s},\cdot)\|_{L^2(\torus^2)}^2 \,{\rm ds} = 0 \text{ for each } t \in [0,T).
 \end{equation}
\end{proposition}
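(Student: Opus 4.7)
The plan is to upgrade the a priori $L^\infty_tL^p_x$-bound on $\omega^\nu$, already exploited in Lemma \ref{strongconv}, to an $L^{2p}((0,T)\times\torus^2)$-bound at cost $\nu^{-1}$, and then to interpolate $L^2_x$ between $L^p_x$ and $L^{2p}_x$. For $p>1$ this interpolation leaves a strictly positive surplus power of $\nu$ multiplying the dissipation, which is the source of the decay.

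The first step is the $L^p$ energy estimate for \eqref{vortnuNSeqF}: multiplying by $|\omega^\nu|^{p-2}\omega^\nu$ and integrating over $\torus^2$, the transport term drops out since $\dv u^\nu=0$, and integration by parts together with the pointwise identity $|\omega^\nu|^{p-2}|\nabla\omega^\nu|^2 = \tfrac{4}{p^2}\bigl|\nabla(|\omega^\nu|^{p/2})\bigr|^2$ yields
\[\frac{1}{p}\frac{d}{dt}\|\omega^\nu\|_{L^p}^p + \frac{4(p-1)}{p^2}\,\nu\int_{\torus^2}\bigl|\nabla(|\omega^\nu|^{p/2})\bigr|^2\,dx = \int_{\torus^2} g^\nu |\omega^\nu|^{p-2}\omega^\nu\,dx.\]
Bounding the right-hand side by $\|g^\nu\|_{L^p}\|\omega^\nu\|_{L^p}^{p-1}$ via H\"older and integrating in time, hypotheses \ref{Lpinitvortconv}--\ref{gnubdd} deliver $\|\omega^\nu\|_{L^\infty_tL^p_x}\le C$ together with the crucial dissipative estimate
\[\nu\int_0^T\int_{\torus^2}\bigl|\nabla(|\omega^\nu|^{p/2})\bigr|^2\,dx\,ds \leq C,\]
uniformly in $\nu$. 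Setting $h^\nu := |\omega^\nu|^{p/2}$, this says $h^\nu$ is bounded in $L^\infty_tL^2_x$ while $\sqrt{\nu}\,\nabla h^\nu$ is bounded in $L^2_{t,x}$. Applying the two-dimensional Ladyzhenskaya inequality $\|h\|_{L^4}^4 \lesssim \|h\|_{L^2}^2(\|h\|_{L^2}^2+\|\nabla h\|_{L^2}^2)$ to $h^\nu$ then upgrades this to
\[\int_0^T\int_{\torus^2}|\omega^\nu|^{2p}\,dx\,ds = \int_0^T\|h^\nu(s,\cdot)\|_{L^4}^4\,ds \leq C\nu^{-1}.\]

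For $p \ge 2$, the embedding $L^p(\torus^2)\hookrightarrow L^2(\torus^2)$ renders $\nu\|\omega^\nu(t,\cdot)\|_{L^2}^2 \le C\nu$ trivially, so the substantive case is $1 < p < 2$. There I would use the log-convexity interpolation $\|\omega^\nu\|_{L^2} \leq \|\omega^\nu\|_{L^p}^{p-1}\|\omega^\nu\|_{L^{2p}}^{2-p}$, square it, integrate over $[0,t]$, and apply H\"older in time (with conjugate exponents $\tfrac{p}{2-p}$ and $\tfrac{p}{2(p-1)}$) to the $L^{2p}$ factor to obtain
\[\nu\int_0^t\|\omega^\nu(s,\cdot)\|_{L^2}^2\,ds \leq C\,t^{2(p-1)/p}\,\nu^{2(p-1)/p} \longrightarrow 0 \quad\text{as } \nu\to 0^+,\]
which is the desired conclusion (in fact for the whole family, no subsequence extraction needed). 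The main subtlety I expect is the rigorous justification of the renormalization in the first step given only $L^p$-initial regularity, but the parabolic smoothing of \eqref{vortnuNSeqF} places $\omega^\nu \in L^2((0,T);W^{1,p}(\torus^2))$ (as noted in the proof of Lemma \ref{strongconv}), which is more than enough regularity for a routine smooth-approximation-and-limit argument to legitimize the identity above.
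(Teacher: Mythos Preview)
Your argument is correct and takes a genuinely different route from the paper. The paper derives, via Gagliardo--Nirenberg and the $L^p$ vorticity bound, a differential inequality $(z^\nu)' \le -A\nu (z^\nu)^{\alpha} + B(z^\nu)^{1/2}$ for $z^\nu=\|\omega^\nu\|_{L^2}^2$, and then carries out an intricate ODE comparison analysis, splitting into cases according to the position of $z^\nu(0)$ relative to the equilibrium $R^\ast_\nu=(B/A\nu)^{2/(2\alpha-1)}$ and, in the hard case, using the inverse of the function $\Phi_\nu(r)=-\int_r^\infty d\rho/\varphi_\nu(\rho)$ together with several integration-by-parts manipulations. By contrast, you work directly with the $L^p$ energy identity, extract the dissipative bound $\nu\!\int_0^T\!\|\nabla(|\omega^\nu|^{p/2})\|_{L^2}^2\le C$, upgrade this via Ladyzhenskaya to $\int_0^T\|\omega^\nu\|_{L^{2p}}^{2p}\le C\nu^{-1}$, and close by the interpolation $\|\omega^\nu\|_{L^2}\le\|\omega^\nu\|_{L^p}^{p-1}\|\omega^\nu\|_{L^{2p}}^{2-p}$. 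Your approach is shorter, avoids the case analysis entirely, yields an explicit decay rate $\nu^{2(p-1)/p}$, and gives the conclusion for the full family rather than a subsequence. What the paper's method buys, on the other hand, is pointwise-in-time control of $\|\omega^\nu(t)\|_{L^2}^2$ via the supersolution $m(t)$, which is more information than the integrated bound you obtain; but for the purpose of Proposition~\ref{dissiptermto0} this extra information is not needed.
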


\begin{proof}
Let us assume that, in \ref{Lpinitvort} and \ref{Lpinitvortconv},  $1< p <2$, and that $\omega_0 \notin L^2(\torus^2)$ since, otherwise, the result is trivial.

We divide the proof in several steps. The first step is to establish a differential inequality for $\|\omega^\nu(t,\cdot)\|_{L^2(\torus^2)}$.

\begin{step} \label{step1}
If $z^\nu=z^\nu(t)\equiv \|\omega^\nu(t,\cdot)\|_{L^2(\torus^2)}^2$ then there exist $A$, $B>0$, $\alpha > 2$ such that
\[
\frac{dz^\nu}{dt} \leq -A \nu (z^\nu)^\alpha + B (z^\nu)^{1/2} \text{ for all } \nu>0.
\]
\end{step}

\noindent {\it Proof of Step \ref{step1}:}

\vspace{10pt}

The proof is an adaptation of the proof of estimate (14) in \cite{CLNS16}. We begin with the observation that, if we multiply \eqref{vortnuNSeqF} by $\omega^\nu$ and integrate on the torus $\torus^2$ we get
\begin{equation} \label{L2enerestvort}
\frac{d}{dt} \|\omega^\nu(t,\cdot)\|_{L^2(\torus^2)}^2 = -2\nu \|\nabla\omega^\nu(t,\cdot)\|_{L^2(\torus^2)}^2 +
2\int_{\torus^2} (g^\nu \cdot \omega^\nu)(t,{\rm x}) \,{\rm dx}.
\end{equation}
As in \cite{CLNS16} we use the Gagliardo-Nirenberg inequality to find
\[
-2\nu \|\nabla\omega^\nu(t,\cdot)\|_{L^2(\torus^2)}^2 \leq -2\nu \left(\|\omega^\nu(t,\cdot)\|_{L^2(\torus^2)}^2\right)^{2/(2-p)}
\left(\|\omega^\nu(t,\cdot)\|_{L^p(\torus^2)}\right)^{-2p/(2-p)},
\]
which, in turn, is bounded by
\[
-2C\nu \left(\|\omega^\nu(t,\cdot)\|_{L^2(\torus^2)}^2\right)^{2/(2-p)}
\left(\|\omega^\nu_0\|_{L^p(\torus^2)} + \|g^\nu\|_{L^1((0,T);L^p(\torus^2))}\right)^{-2p/(2-p)},
\]
for some $C=C(p)>0$ by \eqref{vortLpest}.

Now, since $\omega_0^\nu \to \omega_0$ strongly in $L^p(\torus^2)$ (hypothesis \ref{Lpinitvortconv} of Theorem \ref{physwksoltns}) and since $\|g^\nu\|_{L^1((0,T);L^p(\torus^2))}$ is bounded (hypothesis \ref{gnubdd} of Theorem \ref{physwksoltns}), it follows that
\begin{eqnarray} \label{Apart}
-2\nu \|\nabla\omega^\nu(t,\cdot)\|_{L^2(\torus^2)}^2 \leq -A\nu \left(\|\omega^\nu(t,\cdot)\|_{L^2(\torus^2)}^2\right)^{\alpha},
\text{ with } \alpha = \frac{2}{2-p} \text{ and } \\
A=2C\sup_{\nu>0}  \left(\|\omega^\nu_0\|_{L^p(\torus^2)} + \|g^\nu\|_{L^1((0,T);L^p(\torus^2))}\right)^{-2p/(2-p)}.
\end{eqnarray}
We note that $\alpha > 2$.

Let
\[
B = 2\sup_{\nu>0} \| g^\nu \|_{L^\infty((0,T);L^2(\torus^2))}.
\]

We conclude the proof by using \eqref{Apart} together with the Cauchy-Schwarz inequality in \eqref{L2enerestvort} and then switching to the notation
\[z^\nu=z^\nu(t)\equiv \|\omega^\nu(t,\cdot)\|_{L^2(\torus^2)}^2.\]

This concludes the proof of Step \ref{step1}.

\vspace{10pt}

 The next step is a Gronwall-type lemma for the differential inequality obtained in Step \ref{step1}. Before we proceed, we note that the introduction of forcing gives rise to the term $B(z^{\nu})^{1/2}$ in the differential inequality. Our main issue in what follows is to show that this added growth term can be controlled.

Recall that we assumed that $\omega_0$ does not belong to $L^2$. We also assumed that $\omega_0^{\nu} \to \omega_0$ in $L^p$, where $1<p<2$, which gives $\omega_0^\nu \rightharpoonup \omega_0$ in $\mathcal{D}^\prime $. Therefore, by weak lower semicontinuity of the $L^2$-norm, we have $\|\omega^\nu_0\|_{L^2}^2 \equiv z^\nu (0) \to +\infty$ as $\nu \to 0^+$. We emphasize that $\|\omega_0^{\nu}\|_{L^2}$ may or may not be finite, and we wish to consider both possibilities. However, even in the case $\|\omega_0^{\nu}\|_{L^2} = +\infty$, it still holds that
$\|\omega^\nu ( \delta,\cdot)\|_{L^2}<+\infty$ for all $\delta > 0$, due to parabolic regularity.

\begin{step} \label{step2}
Let $\delta \geq 0$ be such that $z^\nu(\delta) < \infty$. Denote by $m=m(t)$ the solution of the ordinary differential equation
\begin{eqnarray} \label{mequation}
 m^\prime = -A\nu m^\alpha + B \sqrt{m}, &   t>\delta, \\
\nonumber m(\delta)= z^\nu (\delta).  &
\end{eqnarray}
Then
\[0 \leq z^\nu(t) \leq m(t) \text{ for all } \delta < t < T.\]
\end{step}

\noindent {\it Proof of Step \ref{step2}:}

\vspace{10pt}

We introduce the notation
\begin{equation}\label{varphi}
  \varphi_\nu=\varphi_\nu(r)=-A\nu r^\alpha + B \sqrt{r}.
\end{equation}
Then we have
\[
m^\prime = \varphi_\nu(m),
\]
\[(z^\nu)^\prime \leq \varphi_\nu(z^\nu),\]
\[m(\delta)=z^\nu(\delta).
\]

It is immediate that $\varphi_\nu$ is strictly concave if $r>0$, so that
\[\varphi_\nu(z^\nu)\leq \varphi_\nu(m) + \varphi_\nu^\prime (m) (z^\nu - m).\]

It follows that
\[\left( z^\nu - m \right)^\prime \leq \varphi_\nu^\prime (m) \left( z^\nu - m \right),\]
and hence
\[\left( z^\nu - m \right)(t) \leq \left( z^\nu - m \right)(\delta) \exp \left( \int_\delta^t \varphi_\nu^\prime (m({\rm s})) \, {\rm ds} \right) \equiv 0,
\]
as desired. This concludes the proof of Step \ref{step2}.

\vspace{10pt}

Next, keeping the notation introduced in the proof of Step \ref{step2}, \eqref{varphi}, we notice that there are only two roots of $\varphi_\nu$: $r=0$ and:
\begin{equation} \label{Rast}
r=R^\ast_\nu \equiv \left(\frac{B}{A\nu}\right)^{2/(2\alpha - 1)}.
\end{equation}
Observe that $\varphi_\nu > 0$ in $(0,R^\ast_\nu)$ and $\varphi_\nu < 0$ in $(R^\ast_\nu,+\infty)$.

The behavior of $m(t)$, and ultimately of $z^\nu (t)$, is determined by the relative position of $m(\delta)$ (which is equal to $z^\nu(\delta)$) with respect to $R^\ast_\nu$. We divide the possibilities into three different cases:
\begin{eqnarray}
& \label{lessthanRast} m(\delta) < R^\ast_\nu: \text{ this implies } m(t) < R^\ast_\nu \text{ for all } t>\delta &
\\ \nonumber & \hspace{2cm} \text{ and, in view of Step \ref{step2}, } z^\nu(t) < R^\ast_\nu  \text{ for all } \delta < t < T; &\\
& \label{equalRast} \hspace{.2cm} m(\delta) = R^\ast_\nu: \text{ this gives } m(t) \equiv R^\ast_\nu \text{ for all } t>\delta \text{ and} &
\\ \nonumber & \hspace{2cm} \text{ using again Step \ref{step2}, }  z^\nu(t) \leq R^\ast_\nu, \text{ for all } \delta < t < T; & \\
& \label{greaterthanRast} \hspace{1cm} m(\delta) > R^\ast_\nu: \text{ in this case we have } m(t) > R^\ast_\nu \text{ for all } t>\delta. &
\end{eqnarray}

Let us now consider three different possibilities for the behavior of $z^\nu (0)$, which, we recall, may or may not be finite: either
\begin{equation}\label{limsupless}
  \limsup_{\nu \to 0^+} \frac{z^\nu (0)}{R^\ast_\nu} < 1, \text{ or}
\end{equation}

\begin{equation}\label{limsupequal}
  \limsup_{\nu \to 0^+} \frac{z^\nu (0)}{R^\ast_\nu} = 1, \text{ or}
\end{equation}

\begin{equation}\label{limsupgreater}
  \limsup_{\nu \to 0^+} \frac{z^\nu (0)}{R^\ast_\nu} > 1.
\end{equation}

In case \eqref{limsupless} holds then there exists a sequence $\{\nu_k\}_{k=1}^{+\infty}$, $\nu_k \to 0^+$, such that, for sufficiently large $k$, $z^{\nu_k}(0) < R^\ast_{\nu_k}$, which we have argued in \eqref{lessthanRast}, leads to $z^{\nu_k}(t) < R^\ast_{\nu_k}$, $0 < t < T$. Recall that $z^{\nu_k}(t)=\|\omega^{\nu_k}(t,\cdot)\|_{L^2(\torus^2)}^2$. We then have
\begin{equation} \label{limsuplessresult}
\lim_{k\to +\infty} \nu_k \int_0^t \|\omega^{\nu_k}({\rm s},\cdot)\|_{L^2(\torus^2)}^2 \, {\rm ds} \leq
\lim_{k\to +\infty} \nu_k \int_0^t \left(\frac{B}{A\nu_k}\right)^{2/(2\alpha-1)} \, {\rm ds} =0,
\end{equation}
since $1-[2/(2\alpha -1)]>0$, as $\alpha > 2$.

Next let us consider the case \eqref{limsupequal}. If this holds true then there exists a sequence $\{\nu_k\}_{k=1}^{+\infty}$, $\nu_k \to 0^+$, such that, for sufficiently large $k$, $z^{\nu_k}(0) \leq  2R^\ast_{\nu_k}$. Now, for those $k$ such that $z^{\nu_k}(0) \leq R^\ast_{\nu_k}$ we already know, from \eqref{lessthanRast} and \eqref{equalRast}, that $z^{\nu_k}(t) \leq R^\ast_{\nu_k}$, $0 < t < T$. If, however, $R^\ast_{\nu_k} < z^{\nu_k}(0) \leq 2R^\ast_{\nu_k}$ then $m(t) > R^\ast_{\nu_k}$, for all $t>0$, see \eqref{greaterthanRast}. But then, since $\varphi_{\nu_k}(m) < 0$, $m(t) \leq m(0) = z^{\nu_k}(0) \leq 2R^\ast_{\nu_k}$. By Step \ref{step2} we obtain, once again, that $z^{\nu_k}(t) \leq 2R^\ast_{\nu_k}$ for all $0< t < T$. Putting these estimates together yields, similarly to the previous case,
\begin{equation} \label{limsupequalresult}
\lim_{k\to +\infty} \nu_k \int_0^t \|\omega^{\nu_k}({\rm s},\cdot)\|_{L^2(\torus^2)}^2 \, {\rm ds} \leq
\lim_{k\to +\infty} \nu_k \int_0^t 2 \left(\frac{B}{A\nu_k}\right)^{2/(2\alpha-1)} \, {\rm ds} =0.
\end{equation}

It remains to analyze the third case, \eqref{limsupgreater}. First we need to introduce more notation:

\begin{equation} \label{Phi}
\Phi_\nu=\Phi_\nu(r) \equiv -\int_r^\infty \frac{{\rm d}\rho}{\varphi_\nu(\rho)} , \text{ with } r>R^\ast_{\nu}.
\end{equation}

We will need some basic facts about $\Phi_\nu$.

\begin{step} \label{step3}
Consider $\Phi_\nu=\Phi_\nu (r)$ given in \eqref{Phi} with $r>R^\ast_\nu$. Then it holds that:
\begin{enumerate}
  \item[(a)] $\Phi_\nu$ is strictly decreasing and, thus, invertible with inverse $\Phi_\nu^{-1}$;
  \item[(b)] \[\lim_{r\to +\infty} \Phi_\nu(r) = 0;\]
  \item[(c)] \[\lim_{r\to \left(R^\ast_\nu\right)^+} \Phi_\nu(r) = +\infty.\]
\end{enumerate}
\end{step}

\noindent {\it Proof of Step \ref{step3}:}

\vspace{10pt}

Item (a) follows immediately from
\[\Phi_\nu^\prime (r) = \frac{1}{\varphi_\nu(r)} < 0 \text{ if } r>R^\ast_\nu.\]

Set
\begin{equation}\label{Rastast}
  R^{\ast\ast}_\nu \equiv \left(\frac{2B}{A\nu}\right)^{2/(2\alpha -1)}.
\end{equation}
Notice that:
\begin{equation} \label{reducetonoforce}
\text{if } r>R^{\ast\ast}_{\nu} \text{ then }\varphi_{\nu}(r) \leq -\frac{A\nu r^\alpha}{2}.
\end{equation}

Item (b) can now be obtained though the estimate below, for $r> R^{\ast\ast}_\nu$:
\[\Phi_\nu(r) \leq \int_r^{+\infty} \frac{2}{A\nu {\rm s}^\alpha} \, {\rm ds} \to 0 \text{ as } r\to +\infty.\]

Finally, we address item (c). We note that
\[\varphi_\nu^{\prime\prime}(r) = -\alpha(\alpha -1)A\nu r^{\alpha -2} - \frac{B}{4 r^{3/2}} < 0,\]
and, therefore, $\varphi_\nu^\prime$ is decreasing. Let $R^\ast_\nu < r < R^{\ast\ast}_\nu$. Observe, also, that $\varphi_\nu^\prime < 0$ in $(R^\ast_\nu,+\infty)$, since $\varphi^\prime_\nu$ is decreasing and vanishes only at $[B/(2A\nu)]^{2/(2\alpha - 1)} < R_\nu^\ast$.

Therefore
\begin{eqnarray*}
& \Phi_\nu(r) \geq - \ds{ \int_r^{R^{\ast\ast}_\nu} \frac{{\rm d}\rho}{\varphi_\nu(\rho)} } &\\
& & \\
& \geq - \ds{ \int_r^{R^{\ast\ast}_\nu} \frac{{\rm d}\rho}{\varphi_\nu^\prime(R^{\ast\ast}_\nu)(\rho - R^\ast_\nu)} } &\\
& \\
& = - \ds{ \frac{1}{\varphi_\nu^\prime(R^{\ast\ast}_\nu) } \log \left| \frac{R^{\ast\ast}_\nu- R^\ast_\nu}{r- R^\ast_\nu}  \right| } & \to +\infty \text{ as } r \searrow R^\ast_\nu.
\end{eqnarray*}
The second inequality above comes from the following set of observations:
\[\varphi_\nu(\rho) = \varphi_\nu^\prime(\theta^\ast_\nu)(\rho - R^\ast_\nu), \text{ for some } R_\nu^\ast \leq \theta_\nu^\ast \leq \rho;\]
\[\varphi^\prime_\nu (R^{\ast\ast}_\nu) < \varphi^\prime_\nu(\theta^\ast_\nu) < 0, \text{ since } R_\nu^\ast \leq \theta_\nu^\ast \leq \rho, \text{ since } \rho < R_\nu^{\ast\ast} \text{ and since } \varphi^{\prime\prime}_\nu < 0.\]
\[\text{Therefore } 0 < -\frac{1}{\varphi^\prime_\nu (R^{\ast\ast}_\nu)} < -\frac{1}{\varphi^\prime_\nu (\theta^{\ast}_\nu)},\]
and, hence, 
\[ -\frac{1}{\varphi_\nu(\rho)} = -\frac{1}{\varphi^\prime_\nu (\theta^{\ast}_\nu)(\rho - R_\nu^\ast)} \geq -\frac{1}{\varphi^\prime_\nu (R^{\ast\ast}_\nu)(\rho - R_\nu^\ast)} \text{ whenever } R_\nu^\ast < \rho < R_\nu^{\ast\ast}. \]

This concludes the proof of Step \ref{step3}.

\vspace{10pt}

It follows from the proof of Step \ref{step3} that $\Phi_\nu$ is a difeomorphism from $(R_\nu^\ast , + \infty)$ to $(0,+\infty)$.

Now we return to the discussion of the case in which $\limsup_{\nu \to 0^+} [z^\nu (0)  / R^\ast_\nu ] > 1$, \eqref{limsupgreater}. Here we can find a sequence $\nu_k \to 0^+$ such that, for sufficiently large $k$, $z^{\nu_k}(0) > R^\ast_{\nu_k}$. Notice that  $z^{\nu_k}(0)=+\infty$ is included here.

Recall that $W^{1,p}(\torus^2)) \subset L^2(\torus^2)$ if $p>1$, and, from the proof of Lemma \ref{strongconv}, that $\omega^\nu \in L^\infty((0,T);L^p(\torus^2)) \cap L^2((0,T);W^{1,p}(\torus^2))$. Thus $\omega^\nu \in L^2((0,T);L^2(\torus^2))$ and, hence, $\omega^\nu(t,\cdot) \in L^2(\torus^2)$ {\it a.e.} $t \in (0,T)$. Recall the $L^2$ energy estimate for vorticity \eqref{L2enerestvort}. Then, from hypothesis \ref{gnubdd} of Theorem \ref{physwksoltns}, it follows that, for $t>s$,
\[\|\omega^\nu(t,\cdot)\|_{L^2}^2 \leq \|\omega^\nu(s,\cdot)\|_{L^2}^2 + C(t-s),\]
where $C=\sup_{\nu>0} \|g^\nu\|_{L^\infty((0,T);L^2(\torus^2))}.$
Therefore, if $\omega^\nu(s,\cdot) \in L^2(\torus^2)$ then $\omega^\nu(t,\cdot) \in L^2(\torus^2)$ for all $t \geq s$. It now follows easily from these observations that $\omega^\nu \in L^\infty_{\loc}((0,T);L^2(\torus^2))$. In addition, using information on $\partial_t \omega^\nu$ given by the vorticity equation \eqref{vortnuNSeqF} we find, with an argument similar to what was already used in Lemma \ref{strongconv}, that $\omega^\nu \in C([0,T);L^p_w(\torus^2))$, where $L^p_w$ refers to the weak topology of $L^p$. Hence, by weak lower semicontinuity of norms, we obtain $z^\nu (0) \leq \liminf_{t \to 0^+} z^\nu (t)$, irrespective of whether $z^\nu(0)$ is finite or not.

In view of these observations together with the choice of $\nu_k$ it follows that  there exist $\delta_k>0$ such that, for all $0<\delta<\delta_k$,  $z^{\nu_k}(\delta)< +\infty$ and  $z^{\nu_k}(\delta) > R^\ast_{\nu_k}$.

Let $m=m(t)$ be the solution of \eqref{mequation} with $0< \delta < \delta_k$ and $m(\delta) = z^{\nu_k}(\delta)$.

We begin by observing that $\Phi_\nu$ and its inverse provide a useful representation formula for $m=m(t)$, the solution of \eqref{mequation}. Indeed, from $m^\prime = \varphi_{\nu_k}(m)$ we find
\[\Phi_{\nu_k} (m(t)) - \Phi_{\nu_k} (m(\delta)) = t - \delta, \text{ for } t>\delta.\]
We note that $\Phi_{\nu_k}$ can be evaluated on $m(t)$ since  $m(t) > R^\ast_{\nu_k}$, $t \geq \delta $, see \eqref{greaterthanRast}.

Therefore,
\[m(t) = \Phi_{\nu_k}^{-1}[t-\delta + \Phi_{\nu_k} (z^{\nu_k}(\delta))],\]
since $m(\delta) = z^{\nu_k}(\delta)$. We note in passing that, for $t>\delta$,   $t-\delta + \Phi_{\nu_k} (z^{\nu_k}(\delta)) \in (0,+\infty)$ and, hence, it belongs to the domain of $\Phi_{\nu_k}^{-1}$ by Step \ref{step3}.

By Step \ref{step2} we deduce that
\[z^{\nu_k}(t) \leq \Phi_{\nu_k}^{-1}[t- \delta + \Phi_{\nu_k} (z^{\nu_k}(\delta))], \quad \text{ for } \delta < t < T.\]

This estimate holds true for all $0<\delta < \delta_k$. We will take  $\liminf_{\delta \to 0^+}$ on both sides of the inequality, recalling that $\liminf_{\delta \to 0^+} = \lim_{\varepsilon \to 0^+} \inf_{0<\delta< \varepsilon}$. We will use the following facts: $\Phi_{\nu_k}$ and $\Phi_{\nu_k}^{-1}$ are decreasing difeomorphisms, and, since $z^{\nu_k}(\delta)>R_{\nu_k}^\ast$ for $0<\delta<\delta_k$ and $(z^{\nu_k})^\prime \leq \varphi_{\nu_k}(z^{\nu_k})$, it follows that, at least in the interval $(0,\delta_k)$, $t \mapsto z^{\nu_k}(t)$ is non-increasing.  Therefore
\begin{align*}
\liminf_{\delta \to 0^+} \Phi_{\nu_k}^{-1}[t- \delta + \Phi_{\nu_k} (z^{\nu_k}(\delta))] & \leq \lim_{\varepsilon \to 0^+} \Phi_{\nu_k}^{-1}[t- \varepsilon + \Phi_{\nu_k} (z^{\nu_k}(0))] \\
&=\Phi_{\nu_k}^{-1}[t + \Phi_{\nu_k} (z^{\nu_k}(0))].
\end{align*}

With this we conclude that
\begin{equation} \label{mainznuest}
z^{\nu_k}(t) \leq \Phi_{\nu_k}^{-1}[t + \Phi_{\nu_k} (z^{\nu_k}(0))], \quad \text{ for } 0 < t < T,
\end{equation}
where, if $z^{\nu_k}(0)=+\infty$ then $\Phi_{\nu_k} (z^{\nu_k}(0))=0$.

\begin{step} \label{step4}
Set $M_k=M_k(t) \equiv  \Phi_{\nu_k}^{-1}[t + \Phi_{\nu_k} (z^{\nu_k}(0))]$. Let $R^{\ast\ast}_{\nu_k}$ be as in \eqref{Rastast}. Then
\begin{equation}\label{blu}
\nu_k \int_0^t z^{\nu_k} ({\rm s}) \, {\rm ds} \leq
\nu_k \int_{R^{\ast\ast}_{\nu_k}}^{z^{\nu_k}(0)} \Phi_{\nu_k}({\rm y}) \, {\rm dy}
+ \nu_k t R^{\ast\ast}_{\nu_k} + \nu_k R^{\ast\ast}_{\nu_k} \Phi_{\nu_k} (z^{\nu_k}(0)),
\end{equation}
for each  $0 < t < T$.
\end{step}

\begin{remark}
This result remains valid even if $z^{\nu_k}(0) \leq R^{\ast\ast}_{\nu_k}$. However, since $z^{\nu_k}(0) \to +\infty$ as $k\to +\infty$, $z^{\nu_k}(0) > R^{\ast\ast}_{\nu_k}$ for sufficiently large $k$.
\end{remark}

\noindent {\it Proof of Step \ref{step4}:}

\vspace{10pt}

From \eqref{mainznuest} we find
\begin{eqnarray}
\label{bla} &\noindent \nu_k \int_0^t z^{\nu_k} ({\rm s}) \, {\rm ds} \hspace{2.5cm} &\\
\label{ble} & \leq
\nu_k \int_0^t  \Phi_{\nu_k}^{-1}[{\rm s} + \Phi_{\nu_k} (z^{\nu_k}(0))] \, {\rm ds}, &
\end{eqnarray}
so that, under the change variables
\[
y = \Phi_{\nu_k}^{-1}[{\rm s} + \Phi_{\nu_k} (z^{\nu_k}(0))]
\]
we deduce
\[ \eqref{ble} = \nu_k \int_{z^{\nu_k}(0)}^{M_k(t)} \frac{{\rm y}}{\varphi_{\nu_k} ({\rm y})} \, {\rm dy}.
\]
Integrating by parts leads to
\begin{eqnarray}
\nonumber \eqref{bla} & \leq \nu_k \int_{M_k(t)}^{z^{\nu_k}(0)} \Phi_{\nu_k}({\rm y})\, {\rm dy} +
\nu_k M_k(t) [t + \Phi_{\nu_k} (z^{\nu_k}(0))] - \nu_k z^{\nu_k}(0)\Phi_{\nu_k}(z^{\nu_k}(0))
\\
\label{bli}  & \leq \nu_k \int^{z^{\nu_k}(0)}_{M_k(t)} \Phi_{\nu_k}({\rm y})\, {\rm dy} +
\nu_k t M_k(t), \hspace{5.5cm}
\end{eqnarray}
since $\Phi_{\nu_k}$ is decreasing, so $M_k(t) \leq z^{\nu_k}(0)$.

Now, if $M_k(t) \leq R^{\ast\ast}_{\nu_k}$ then
\begin{eqnarray*}
  \eqref{bli} & = & \nu_k \int^{R^{\ast\ast}_{\nu_k}}_{M_k(t)} \Phi_{\nu_k}({\rm y})\, {\rm dy} + \nu_k \int^{z^{\nu_k}(0)}_{R^{\ast\ast}_{\nu_k}}  \Phi_{\nu_k}({\rm y})\, {\rm dy} +
\nu_k t M_k(t) \\
   & \leq & \nu_k\Phi_{\nu_k}(M_k(t))(R^{\ast\ast}_{\nu_k} - M_k(t))  + \nu_k \int^{z^{\nu_k}(0)}_{R^{\ast\ast}_{\nu_k}}  \Phi_{\nu_k}({\rm y})\, {\rm dy} +
\nu_k t M_k(t) \\
   &=& \nu_k [t + \Phi_{\nu_k} (z^{\nu_k}(0))] (R^{\ast\ast}_{\nu_k} - M_k(t))  + \nu_k\int^{z^{\nu_k}(0)}_{R^{\ast\ast}_{\nu_k}}  \Phi_{\nu_k}({\rm y})\, {\rm dy} +
\nu_k t M_k(t) \\
   &\leq & \nu_k t R^{\ast\ast}_{\nu_k} + \nu_k \Phi_{\nu_k} (z^{\nu_k}(0)) R^{\ast\ast}_{\nu_k} +
   \nu_k \int^{z^{\nu_k}(0)}_{R^{\ast\ast}_{\nu_k}}  \Phi_{\nu_k}({\rm y})\, {\rm dy},
\end{eqnarray*}
as desired. Here we used that $\Phi_{\nu_k}$ is decreasing and positive.

If, on the other hand, $M_k(t) > R^{\ast\ast}_{\nu_k}$ then
\begin{eqnarray*}
  \eqref{bli} & = & -\nu_k \int_{R^{\ast\ast}_{\nu_k}}^{M_k(t)} \Phi_{\nu_k}({\rm y})\, {\rm dy} + \nu_k \int^{z^{\nu_k}(0)}_{R^{\ast\ast}_{\nu_k}}  \Phi_{\nu_k}({\rm y})\, {\rm dy} +
\nu_k t M_k(t) \\
   & \leq & -\nu_k\Phi_{\nu_k}(M_k(t))(M_k(t) - R^{\ast\ast}_{\nu_k})  + \nu_k \int^{z^{\nu_k}(0)}_{R^{\ast\ast}_{\nu_k}}  \Phi_{\nu_k}({\rm y})\, {\rm dy} +
\nu_k t M_k(t) \\
   &=& -\nu_k [t + \Phi_{\nu_k} (z^{\nu_k}(0))] (M_k(t) - R^{\ast\ast}_{\nu_k})  + \nu_k\int^{z^{\nu_k}(0)}_{R^{\ast\ast}_{\nu_k}}  \Phi_{\nu_k}({\rm y})\, {\rm dy} +
\nu_k t M_k(t) \\
   &\leq & \nu_k t R^{\ast\ast}_{\nu_k} + \nu_k \Phi_{\nu_k} (z^{\nu_k}(0)) R^{\ast\ast}_{\nu_k} +
   \nu_k \int^{z^{\nu_k}(0)}_{R^{\ast\ast}_{\nu_k}}  \Phi_{\nu_k}({\rm y})\, {\rm dy},
\end{eqnarray*}
as we wished.

This completes the proof of Step \ref{step4}.

\vspace{10pt}

Using the estimate of $\varphi_\nu$, \eqref{reducetonoforce}, for $r> R^{\ast\ast}_\nu$, we deduce that
\begin{equation}
\Phi_{\nu_k}(r) \leq \frac{2}{A\nu_k (\alpha -1)r^{\alpha - 1}} \text{ for all } r > R^{\ast\ast}_{\nu_k}.
\end{equation}

Therefore,
\begin{equation} \label{blo}
\nu_k \int^{z^{\nu_k}(0)}_{R^{\ast\ast}_{\nu_k}}  \Phi_{\nu_k}({\rm y})\, {\rm dy} \leq \frac{2}{A(\alpha -1)(\alpha -2)} \left( R^{\ast\ast}_{\nu_k} \right)^{2-\alpha} \to 0 \text{ as } k \to +\infty,
\end{equation}
since $R^{\ast\ast}_{\nu_k} \to +\infty$ and $2-\alpha < 0$.

In addition,
\begin{eqnarray}
\label{grr}  \nu_k  R^{\ast\ast}_{\nu_k}  \to 0 \text{ and }\\
\label{grrr} \Phi_{\nu_k} (z^{\nu_k}(0)) \to 0,
\end{eqnarray}
as $k\to +\infty$, in view of Step \ref{step3} (b) since $z^{\nu_k}(0)\to +\infty$.

Using \eqref{blo}, \eqref{grr} and \eqref{grrr} in \eqref{blu} yields the desired conclusion in the case \eqref{limsupgreater}.

This, together with \eqref{limsuplessresult} and \eqref{limsupequalresult}, concludes the proof of Proposition \ref{dissiptermto0}.

\end{proof}

As previously noted, in view of \eqref{ensandwich}, Theorem \ref{physwksoltns} is an immediate consequence of Lemma \ref{strongconv} and Proposition \ref{dissiptermto0}.

\section{Conclusion}

In this work, the condition that vorticity is $p$-th power integrable plays the role of a compactness criterion. More precisely, a sequence of approximations of velocity with vorticity uniformly in $L^p$, $p>1$, is compact in $L^2$, which implies convergence of the nonlinearity in the Euler equations. In \cite{LMPP20}, Lanthaler {\it et al.} introduced an alternative compactness criterion, expressed in terms of the existence of a uniform modulus of continuity for $L^2$-based structure functions, which the authors prove to be equivalent to both compactness in $L^2$ for the approximate velocities and conservation of energy for the inviscid limit. (See also \cite{CV2018} and \cite{DN2018} for earlier, related, work.) This raises a natural question: what is the extension of their equivalence result to forced flows, and what is the minimum regularity requirement for the forcing. This is a subject of further research.

\vspace{20pt}

{\scriptsize \textbf{Acknowledgments.}  The first author's work was supported by CNPq grant 310441/2018-8 and by FAPERJ grant E-26/202.999/2017. The work of the second author was supported by CNPq grant 309648/2018-1 and by FAPERJ grant E-26/202.897/2018. This work was supported in part by the CNPq-FAPERJ Pronex ``Matem\'atica do movimento dos meios cont\'{\i}nuos e suas
aplica\c{c}\~oes estrat\'egicas".}


\end{document}